\documentclass{amsart}

\usepackage[utf8]{inputenc}
\marginparwidth 0.5in 
\oddsidemargin 0.25in 
\evensidemargin 0.25in 
\marginparsep 0.25in
\topmargin 0.25in 
\textwidth 6 in \textheight 8 in
\usepackage{caption}
\usepackage{enumerate}
\usepackage{amsmath}
\usepackage{amssymb}
\usepackage{ upgreek }
\usepackage{mathtools}
\usepackage{centernot}
\usepackage{stmaryrd}
\usepackage{esvect}
\usepackage{amsthm}
\usepackage{upgreek}
\usepackage{comment}
\usepackage{amsfonts}
\usepackage{physics}
\usepackage{tikz-cd}
\usepackage[utf8]{inputenc}
\usepackage[english]{babel}
\usepackage{graphicx}
\usepackage[margin=1in]{geometry}
\usepackage{mathtools}

\graphicspath{{images/}}

\renewcommand{\part}[1]{\noindent\textbf{Part #1)}}

\newcommand{\ba}{\begin{align*}}

\newcommand{\ea}{\end{align*}}

\newcommand{\Int}[1]{%
  {\kern0pt#1}^{\mathrm{o}}%
}

\newcommand{\vol}{\mathrm{vol}}

\newcommand{\bp}{\begin{pmatrix}}
\newcommand{\ep}{\end{pmatrix}}

\newcommand{\interior}[1]{%
  {\kern0pt#1}^{\mathrm{o}}%
}

%

\usepackage{xargs}                      
\usepackage{xcolor}
\usepackage[colorinlistoftodos,prependcaption,textsize=tiny]{todonotes}
\newcommandx{\kate}[2][1=]{\todo[linecolor=red,backgroundcolor=red!25,bordercolor=red,#1]{#2}}
\newcommandx{\zhen}[2][1=]{\todo[linecolor=lime,backgroundcolor=lime!25,bordercolor=lime,#1]{#2}}
\newcommandx{\yi}[2][1=]{\todo[linecolor=black,backgroundcolor=black!25,bordercolor=black,#1]{#2}}
\newcommandx{\kabir}[2][1=]{\todo[linecolor=purple,backgroundcolor=purple!25,bordercolor=purple,#1]{#2}}

\newcommandx{\nat}[2][1=]{\todo[linecolor=blue,backgroundcolor=blue!25,bordercolor=blue,#1]{#2}}
\newcommandx{\jonah}[2][1=]{\todo[linecolor=yellow,backgroundcolor=yellow!25,bordercolor=yellow,#1]{#2}}
\newcommandx{\colin}[2][1=]{\todo[linecolor=gray,backgroundcolor=gray!25,bordercolor=gray,#1]{#2}}
%


\begin{document}
\author[C. Adams, O. Eisenberg, J. Greenberg, K. Kapoor, Z. Liang,
K. O'Connor, N. Pacheco-Tallaj, Y. Wang] {Colin Adams, Or Eisenberg, Jonah Greenberg, Kabir Kapoor, Zhen Liang, \\
Kate O'Connor, Natalia Pacheco-Tallaj, Yi Wang}
\title{Turaev hyperbolicity of classical and virtual knots}
\begin{abstract}By work of W. Thurston, knots and links in the 3-sphere are known to either be torus links, or to contain an essential torus in their complement, or to be hyperbolic, in which case a unique hyperbolic volume can be calculated for their complement. We employ a construction of Turaev to associate a family of hyperbolic 3-manifolds of finite volume to any classical or virtual link, even if non-hyperbolic. These are in turn used to define the \emph{Turaev volume} of a link, which is the minimal volume among all the hyperbolic 3-manifolds associated via this Turaev construction. In the case of a classical link, we can also define the \emph{classical Turaev volume}, which is the minimal volume among all the hyperbolic 3-manifolds associated via this Turaev construction for the classical projections only.   We then investigate these new invariants.\end{abstract}

\maketitle

\theoremstyle{definition}
\newtheorem{definition}{Definition}[section]
\newtheorem{theorem}{Theorem}[section]
\newtheorem{lemma}[theorem]{Lemma}
\newtheorem{conjecture}{Conjecture}
\newtheorem{corollary}[theorem]{Corollary}
\newtheorem{proposition}[theorem]{Proposition}
\newtheorem{example}{Example}[section]
\newtheorem{question}{Question}
\newtheorem{remark}{Remark}


\maketitle

\section{Introduction}


The theory of links in $S^3$ (which here will be called classical links) is equivalent to the theory of classical links in $S^2\times I$, since removing the interiors of two balls from $S^3$ does not impact the theory. A natural extension is to the theory of links in $S\times I$ where $S$ is a compact, orientable surface of higher genus. It turns out that this extension occurs naturally through the theory of virtual knots and links, which originated from  Gauss codes. 


A Gauss code is a sequence of symbols that encodes a knot projection up to planar isotopy. It is obtained by starting at some point on an oriented knot diagram and travelling along the knot, writing down a sequence of integers, two copies of each corresponding to the crossings, letters O and U corresponding to whether we are passing over or under the crossing and $\pm$ signs representing the writhe of the crossing. In the case of an oriented link, we generate a finite collection of such sequences, one for each component. (N.B.: when we talk of Gauss codes, we always mean \textit{oriented} or \textit{signed} Gauss codes). While every classical knot diagram (up to planar isotopy) corresponds to a unique Gauss code (up to relabeling and cyclic permutation), the converse is not true--there are Gauss codes that do not correspond to any classical diagram. Generating knots from the ``missing'' Gauss codes was one of the primary motivations for virtual knot theory. The theory of the set of all Gauss codes modulo the corresponding equivalence relation induced by the Reidemeister moves is equivalent to that of virtual diagrams modulo classical and virtual Reidemeister moves. 

In \cite{KK} and \cite{virtual-equivalent-to-linkinthickenedsurface}, it was shown that in fact both of these theories are equivalent to the theory of links in thickened oriented surfaces $S \times I$ modulo ambient isotopy, stabilization, and destabilization, where by stabilization and destabilization we mean the addition and removal of empty handles. Further, when we project the link $L$ in $S \times I$ to $S \times \{1/2\}$ we obtain a surface-link pair $(S, D)$ where $D$ is a link diagram on $S$. In \cite{Kuperberg}, it was shown that the minimal genus realization of a link is uniquely determined up to isotopy.






Thus, we have the following equivalence:

\begin{theorem}
The following sets are in natural bijection.
    \begin{enumerate}[(i)]
        \item Virtual link diagrams in the plane modulo classical and virtual Reidemeister moves.
        \item Links in thickened surfaces modulo ambient isotopy, homeomorphisms, stabilizations, and destabilizations. 
        \item Surface-link pairs $(S,L)$ modulo isotopy, classical Reidemeister moves on the surface, handle attachments, and handle removals.
        \item (Virtual) Gauss codes modulo rewrites corresponding to Reidemeister moves. 
        \end{enumerate}
        \end{theorem}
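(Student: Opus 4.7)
The plan is to establish the four-way bijection by going around a cycle (i) $\leftrightarrow$ (iv) $\leftrightarrow$ (ii) $\leftrightarrow$ (iii) $\leftrightarrow$ (i), relying on the previously cited equivalences from \cite{KK} and \cite{virtual-equivalent-to-linkinthickenedsurface} for the heart of the argument and supplying the remaining identifications directly.

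First I would handle (i) $\leftrightarrow$ (iv). Given a virtual link diagram, one reads a Gauss code by travelling along each component and recording only the classical crossings (along with over/under information and writhe sign); virtual crossings do not appear. This map is well-defined on equivalence classes because (a) classical Reidemeister moves are exactly the Gauss code rewrites in the standard list, and (b) the virtual Reidemeister moves (including the detour move) preserve the sequence of classical crossing data. For surjectivity, any Gauss code can be realized as a virtual diagram by placing the crossings in the plane in the specified cyclic order and then connecting arcs with virtual crossings wherever strands would otherwise intersect the plane ambient isotopy; any two such realizations differ by virtual Reidemeister moves (again by the detour move).

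Next I would treat (ii) $\leftrightarrow$ (iii). Given a link $L \subset S\times I$, project orthogonally to $S \times \{1/2\}$, perturb to general position to obtain a diagram $D$ on $S$, and record over/under information. Ambient isotopy of $L$ projects to isotopy of $D$ on $S$ together with the usual classical Reidemeister moves performed on the surface, while stabilization and destabilization of $S \times I$ (adding or removing an empty $1$-handle of the $I$-bundle) correspond precisely to handle attachments and removals on $S$ disjoint from $D$. Conversely, a surface-link pair $(S,D)$ thickens to a link in $S\times I$ by lifting under- and over-strands, and the moves on each side match term by term. The minimal genus uniqueness theorem of \cite{Kuperberg} ensures this assignment descends to a bijection on equivalence classes.

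For (i) $\leftrightarrow$ (ii), I would invoke the Kauffman--Kamada--Kuperberg construction recorded in \cite{KK} and \cite{virtual-equivalent-to-linkinthickenedsurface}: from a virtual diagram in the plane, replace a small neighborhood of each virtual crossing by a handle, producing a surface $S$ carrying a genuine diagram; conversely, immerse $S$ in the plane and interpret each double point of the immersion as a virtual crossing. Well-definedness requires checking that classical Reidemeister moves on the virtual diagram correspond to classical Reidemeister moves on $S$, that the detour/virtual Reidemeister moves correspond to ambient isotopy plus stabilization/destabilization, and that two different immersions of $S$ give virtually equivalent diagrams; all of these are established in the cited references.

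The main obstacle is the move-by-move correspondence in step three, in particular verifying that the virtual Reidemeister moves (especially the mixed move) collectively generate exactly the equivalence relation induced by ambient isotopy together with stabilization and destabilization. This is the technical content of \cite{KK} and \cite{virtual-equivalent-to-linkinthickenedsurface}, and once it is cited the remaining bijections are essentially bookkeeping.
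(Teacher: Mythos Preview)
The paper does not supply its own proof of this theorem: it is stated in the introduction as a summary of known results, with the substantive content attributed to \cite{KK}, \cite{virtual-equivalent-to-linkinthickenedsurface}, and \cite{Kuperberg} in the paragraphs immediately preceding the statement. Your proposal is correct and is in the same spirit---you defer the technical core (that virtual Reidemeister moves correspond exactly to stabilization/destabilization plus isotopy) to those same references and fill in the elementary bookkeeping for (i)$\leftrightarrow$(iv) and (ii)$\leftrightarrow$(iii); this is more detail than the paper itself provides, but entirely consistent with its treatment.
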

        Note that the set of classical links is naturally included in the set of virtual links, which is to say that classical link diagrams which cannot be related by a sequence of classical Reidemeister moves cannot be related by a sequence of generalized Reidemeister moves. We refer the reader to \cite{kauffman-intro-to-virtual-knots} for a thorough exposition of virtual knots.
        
        \medskip

In 1978 (c.f.\cite{Thurston}), Thurston showed that a classical knot is either a torus knot, a satellite knot, or a hyperbolic knot. Similarly, a classical non-splittable link that does not contain an essential torus or annulus is hyperbolic. In \cite{virtualknotssummer}, hyperbolic invariants are extended to the virtual category by utilizing the equivalence of virtual links to links in thickened surfaces. 

\begin{definition} Let $S$ be a closed orientable surface. A link $L$ in $S \times I$ is {\it tg-hyperbolic} if:

\begin{enumerate}[(i)]
    \item When $S$ is a sphere, and the two spherical boundaries are capped off with balls, the complement of $L$ is hyperbolic.
    \item When $S$ is a torus, and the two torus boundaries are removed, the link complement is hyperbolic.
    \item When $S$ is neither a sphere nor torus, there exists a hyperbolic structure on the complement of $L$ in $S \times I$ such that the two boundaries are totally geodesic.
\end{enumerate}.
\end{definition}

If  the link in $S \times I$ is  tg-hyperbolic, we can associate a unique hyperbolic volume to it. We can also consider other hyperbolic invariants of the pair $(S \times I, L)$. Thus, we can define hyperbolic invariants of the original virtual link accordingly. See \cite{virtualknotssummer} for more on this, including a table of volumes of virtual knots of four or fewer classical crossings.

But in  both the classical and virtual categories, there exist knots and links such that their associated surface-link pair is not tg-hyperbolic and thus, to which these hyperbolic invariants do not apply. In this paper,  using the theory of Turaev surfaces, we extend hyperbolic invariants to every classical and virtual knot and link. 


\begin{definition}
    Given a connected classical or virtual link diagram $D$ in the plane, define the \emph{Turaev surface-link pair}, $\big(S_T(D), L_T(D)\big)$, to be the surface-link pair constructed as follows. Begin by embedding each crossing in a small disk in the plane. Then glue bands connecting adjacent classical crossings, ignoring any virtual crossings in between (that is, allowing one of the two bands involved  to pass over or under the other, it does not matter which), and adding a half-twist in the band if both endpoints are overcrossings or both are undercrossings, twisted in the direction shown in Figure \ref{Turaevribbon}. Then cap off each boundary component with a disk. Given a virtual link diagram $D$ and a surface link pair $(S',D')$ such that $(S',D') = (S_T(D),L_T(D))$, we say that $(S',D')$ is the \it{Turaev realization} of $D$.
\end{definition}

\begin{figure}[htbp]
    \captionsetup{width=.5\textwidth}
    \centering
    \includegraphics[scale=0.17]{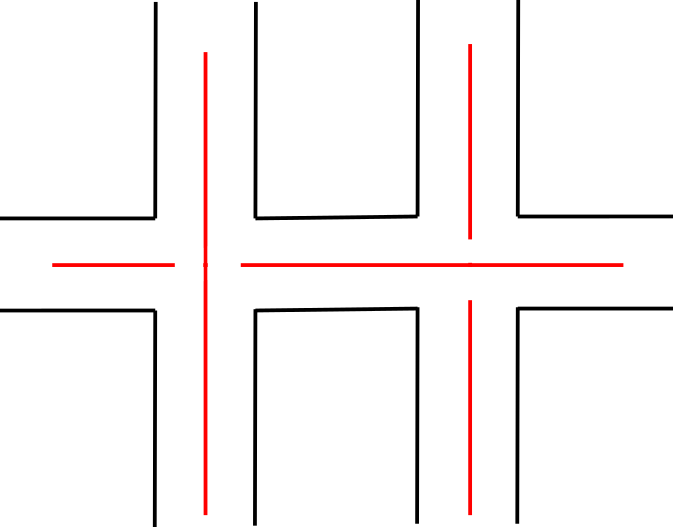}
    \hspace{1em}
    \includegraphics[scale=0.17]{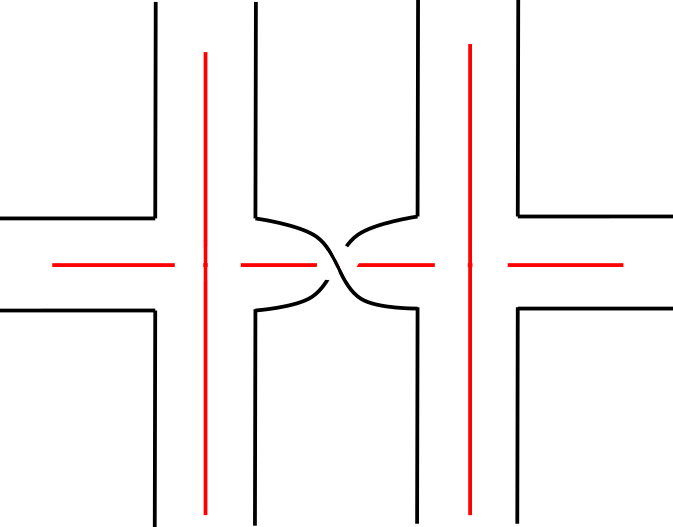}
    \hspace{1em}
    \includegraphics[scale=0.17]{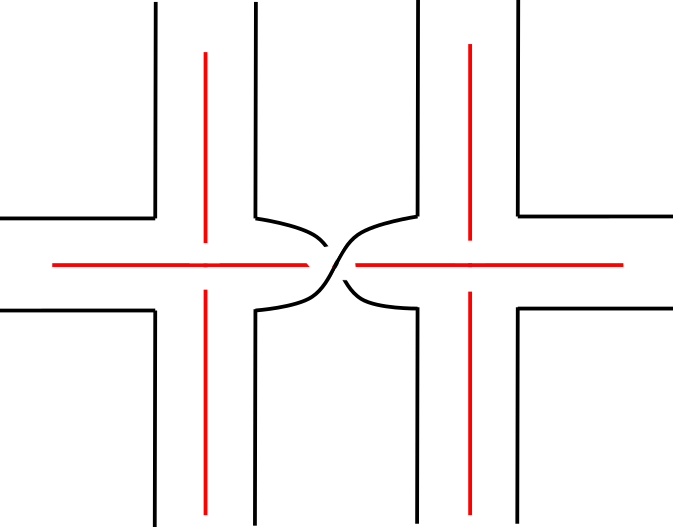}
    \caption{Turaev Realizations}
    \label{Turaevribbon}
\end{figure}

\noindent In the classical case, Turaev surfaces were introduced by Turaev in \cite{Turaev}.
See also \cite{Fomenko} and \cite{virtualknotsbook} where they appear independently as atoms. 

In the case of a classical knot, a second, equivalent construction of the Turaev surface-link pair, is as a cobordism between the $A$ state and the $B$ state of a knot, where the boundary components are then capped off with disks. The $A$ state of a knot is a collection of circles obtained by resolving each crossing with the $A$ smoothing in Figure~\ref{fig:absmoothings}. 
The $B$ state is defined similarly. See \cite{Turaev}. This construction of the Turaev surface-link pair is a special case of a \emph{state surface}, in which the cobordism is  between two opposite resolutions of the knot with some mix of $A$ and $B$ smoothings. 

\begin{figure}[h]
    \centering
    \includegraphics[width=0.35\textwidth]{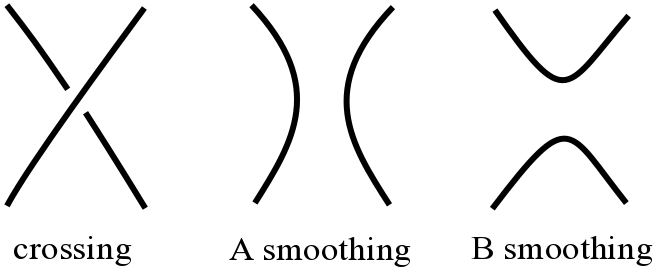}
    \caption{Smoothings of a crossing}
    \label{fig:absmoothings}
\end{figure}

The Turaev surface comes to us naturally with a link projection on it. The ribbon graph is precisely a normal neighborhood of this link projection in the Turaev surface. 

For classical knots, the Turaev surface is always orientable. The cobordism construction comes to us with a natural height function $h$, which is Morse, such that its only critical set $h^{-1}(0)$ is the knot projection, such that $h^{-1}(-1)$ is the all $A$-state smoothing, and such that $h^{-1}(1)$ is the all $B$-state smoothing. This demonstrates that the boundary components can be capped off to yield a closed surface embedded in $3$-space. 

However, in the virtual category, it is possible to obtain nonorientable Turaev surfaces. In this case, we consider $D'$ to be the link in the oriented twisted $I$-bundle over the nonorientable Turaev surface of $D$. See the discussion of atoms in \cite{virtualknotsbook} for another perspective on this.  

From here on, when we discuss a thickened surface $M$, we mean $S\times I$ when $S$ is orientable and the twisted $I$-bundle over $S$ when $S$ is nonorientable. Knots and links in oriented twisted $I$-bundles over non-orientable surfaces have previously been considered in the case of the projective plane in \cite{Dro} and more gnerally in \cite{Bou}. 

\begin{definition} The \it{Turaev genus}  $g_T(L)$ of a classical or virtual link $L$ is the minimal genus of a Turaev surface for $L$ over all the projections of $L$. In the case of an orientable surface, we use the usual genus. In the case of a nonorientable surface, we use one half the number of projective planes, the connected sum of which make up the Turaev surface. 
\end{definition}

Notice that this allows for half-integer genera. But the need for this definition of genus for nonorientable surfaces comes from the desire to be able to compare genera between orientable and non-orientable surfaces.

Of critical importance is the fact that the introduction of the half-twists in the ribbon graph has the effect of guaranteeing that the resulting link is alternating on the surface. By construction, every connected link diagram has an alternating Turaev realization. Furthermore, since boundary components are capped off with disks, every link diagram is realized as a \emph{fully alternating} link in its Turaev surface, in the sense defined in \cite{small2017}.

\begin{definition}
    A link projection $L$ on a closed surface $S$ is \emph{fully alternating} if it is alternating where the interior of every complementary region is an open disk.
\end{definition}

In the literature, a projection with disks as the complementary regions is sometimes called a \emph{cellular embedding}. Note that the projection must be connected to satisfy the criteria for being fully alternating.

\begin{definition}
    A link $L$ embedded in a thickened surface $M$ is \emph{prime} if there is no ball $B$ embedded in $M$ such that $\partial B$ intersects $L$ twice and $B$ contains some subset of $L$ other than an  unknotted arc.
\end{definition}

\begin{definition}
    A link diagram is \emph{Turaev prime} if it is realized as a prime link in its Turaev surface.
\end{definition}

Although we have already defined what it means for the complement of a link $L$ in $S \times I$  to be tg-hyperbolic when $S$ is orientable, we need to extend this to the case of $S$ nonorientable.

\begin{definition} Let $S$ be a closed nonorientable surface. A link $L$ in the twisted I-bundle $M$ over $S$  is {\it tg-hyperbolic} if:

\begin{enumerate}[(i)]
    \item When $S$ is a projective plane, and the spherical boundary of $M$ is capped off with a ball, the complement of $L$ is hyperbolic.
    \item When $S$ is a Klein bottle, and the torus boundary of $M$ is removed, the link complement is hyperbolic.
    \item When $S$ is neither a projective plane nor a Klein bottle, there exists a hyperbolic structure on the complement of $L$ in $M$ such that the boundary of $M$ is totally geodesic.
\end{enumerate}
\end{definition}


In \cite{small2017}, two relevant results were proved, both extensions of Menasco's results for links in the 3-sphere (\cite {Menasco}). Lemma 14 of \cite{small2017}, with a straightforward extension that we provide at the beginning of Section 3 states:  

\begin{theorem}\label{alttg}
    Let $S$ be a closed surface. Then a prime fully alternating link $L$ in a thickened surface $M$ over $S$ is $tg$-hyperbolic, except when:
    
    \begin{enumerate}[(i)]
        \item  $S$ is a sphere  and $L$ is a 2-braid.  
        \item $S$ is the projective plane and $L$ is the analog of a 2-braid. 
        \item $S$ is the projective plane and there exists a simple closed curve that intersects the projection transversely once.
    \end{enumerate}
\end{theorem}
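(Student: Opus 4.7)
The strategy is to reduce to the orientable case by passing to the orientation double cover, obtaining the nonorientable result as the promised straightforward extension. The orientable case --- that a prime fully alternating link $L \subset S \times I$ is tg-hyperbolic except when $S = S^2$ and $L$ is a 2-braid (case (i)) --- is exactly Lemma 14 of \cite{small2017}. So it remains to treat nonorientable $S$. Let $\tilde S \to S$ be the orientation double cover with deck involution $\sigma$; the induced double cover $\tilde M = \tilde S \times I$ of the twisted $I$-bundle $M$ carries a fixed-point-free extension of $\sigma$. The link $L$ lifts to a $\sigma$-invariant link $\tilde L \subset \tilde M$, and because the covering is a local homeomorphism, the lifted projection is still fully alternating with disk complementary regions.

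The proof then proceeds in three steps. First, verify that $\tilde L$ is prime in $\tilde M$. If not, pick a ball $B \subset \tilde M$ exhibiting non-primeness; since a ball admits no fixed-point-free self-involution (by Brouwer), $\sigma(B) \neq B$, and an equivariant innermost-disk surgery on $\partial B \cap \partial \sigma(B)$ produces a ball disjoint from its image. Projecting into $M$ yields a ball violating primeness of $L$, a contradiction --- unless the projection fails to embed, which requires $\partial B$ to cover a one-sided simple closed curve on $S$ meeting $L$ transversely in one point. When $S = \mathbb{RP}^2$ this is precisely case (iii); for $S$ a Klein bottle or higher nonorientable genus, the richer fundamental group lets us rule out such a curve meeting a fully alternating link in exactly one point. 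Second, apply the orientable case to $\tilde L \subset \tilde M$, giving a complete hyperbolic structure on $\tilde M \setminus \tilde L$ (with totally geodesic boundary where applicable) unless $\tilde S = S^2$ and $\tilde L$ is a 2-braid; for $S = \mathbb{RP}^2$, a $\sigma$-equivariant 2-braid on $S^2$ with $\sigma$ the antipodal map descends to precisely the analog of a 2-braid on $\mathbb{RP}^2$, giving case (ii).

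Third, descend via Mostow rigidity: the finite-volume hyperbolic structure on $\tilde M \setminus \tilde L$ is unique up to isometry, so $\sigma$ acts by isometries, and the quotient $M \setminus L = (\tilde M \setminus \tilde L)/\sigma$ inherits a complete hyperbolic structure, proving tg-hyperbolicity. The main obstacle will be the primeness analysis in the first step: executing the equivariant innermost-disk argument carefully enough to arrange the disjointness of $B$ and $\sigma(B)$, and then verifying that the residual one-sided obstruction is genuinely confined to $S = \mathbb{RP}^2$, so that it matches case (iii) exactly rather than introducing new exceptional cases at higher nonorientable genus.
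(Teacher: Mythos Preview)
Your high-level architecture---pass to the orientation double cover, invoke the orientable case from \cite{small2017}, and descend via Mostow--Prasad rigidity---is exactly what the paper does. The divergence is entirely in your first step, the primeness of the lift, and that is where your argument has a genuine gap.

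The paper does not attempt a three-dimensional ball argument. Instead it uses the diagrammatic characterization of primeness for fully alternating links (Theorem~\ref{primefullyalt}, which for $S^2$ is Menasco's result): if the lift $\tilde L$ fails to be prime, there is a circle $\partial E$ on the projection surface $\tilde S$ meeting the diagram in two points with crossings on each side. For $S=\mathbb{RP}^2$ this circle on $S^2$ either descends to a disk-bounding circle on $\mathbb{RP}^2$ (contradicting primeness of $L$) or is antipodally invariant and double-covers the one-sided curve, which then meets $L$ once---precisely case (iii). No three-manifold surgery is needed.

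Your proposed route has two concrete problems. First, once your equivariant surgery has made $B$ and $\sigma(B)$ disjoint, the quotient map restricted to $B$ is automatically an embedding; there is no ``unless the projection fails to embed'' branch left, so as written the argument never produces case~(iii). Whatever obstruction you have in mind must arise \emph{during} the surgery (for instance, the surgered sphere ceasing to bound a ball witnessing non-primeness), not afterward. Second, $\partial B$ is a $2$-sphere, so the phrase ``$\partial B$ covers a one-sided simple closed curve on $S$'' does not parse; the one-sided curve in case~(iii) comes from a circle on the projection surface, not from the boundary sphere of a ball in the thickening. Finally, the assertion that ``the richer fundamental group'' disposes of the Klein bottle and higher nonorientable genera is not an argument; you would still need to show that no analogous obstruction to primeness of the lift arises there. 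The paper avoids all of this by working diagrammatically on the surface rather than with balls in the thickening.
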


We say a link projection is {\it reduced} if there are no monogonal disk faces in its complement. Note that any link projection on any closed surface is equivalent to one that is reduced.  

\begin{theorem}\label{primefullyalt} A fully alternating link  $L$ in a thickened surface $M$ is prime if and only if there is no disk $E$ on the projection surface such that $\partial E$ intersects a reduced fully alternating projection of $L$ transversely at two points and such that there exist crossings in $E$.
\end{theorem}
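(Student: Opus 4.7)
The plan is to adapt Menasco's classical argument for alternating links in $S^3$ (\cite{Menasco}) to the thickened surface $M$, proving the two implications separately.

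\emph{Forward direction.} Suppose such a disk $E \subset S$ exists, with $|\partial E \cap L| = 2$ and crossings inside $E$. A regular neighborhood $N(E)$ in $M$ is a $3$-ball, since $E$ is a disk in the orientable $3$-manifold $M$, and its boundary sphere $\Sigma$ meets $L$ transversely in exactly two points near $\partial E \cap L$, enclosing all crossings of $L$ in $E$. The tangle inside $N(E)$ is non-trivial: it is a reduced alternating tangle with at least one crossing, hence cannot be an unknotted arc, as an unknotted arc would admit a crossing-free diagram, contradicting the minimality of crossing number for reduced alternating diagrams. Therefore $L$ is not prime.

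\emph{Reverse direction.} I argue the contrapositive: assume $L$ is not prime and produce such an $E$. Take a ball $B \subset M$ with $|\partial B \cap L| = 2$ and a non-trivial tangle inside. Following Menasco, enclose each crossing of $L$ in a small ``bubble'' whose upper hemisphere contains the overstrand and lower hemisphere the understrand. Put $\partial B$ in general position with respect to $S$ and to all the bubbles, arranging that $\partial B$ meets each bubble's boundary (if at all) in standard arcs. Then $\partial B \cap S$ is a disjoint union of simple closed curves in $S$ meeting $L$ transversely only at the two points of $\partial B \cap L$.

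The main obstacle is reducing $|\partial B \cap S|$ to a single curve via innermost-disk and disk-exchange arguments, which must be carried out carefully in the setting of arbitrary (possibly nonorientable) surfaces. Any component of $\partial B \cap S$ disjoint from $L$ bounds an innermost disk $D \subset S$ whose interior is a union of complementary faces of the projection; the fully alternating hypothesis guarantees each such face is a disk, so innermost components always bound nontrivial disks to work with, and the reduced hypothesis eliminates monogons that would obstruct disk exchange. At each step the intersection number $|\partial B \cap S|$ strictly decreases, possibly after an isotopy of $\partial B$ through $S$, so the process terminates with a single simple closed curve $\gamma \subset S$ meeting $L$ in the two points of $\partial B \cap L$. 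The disk on $S$ that $\gamma$ bounds on the side containing the crossings of $B \cap L$ is the required $E$; it must contain crossings, since otherwise $B \cap L$ would be an unknotted arc, contradicting non-primality.
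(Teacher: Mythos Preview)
The paper does not prove this theorem; it is quoted as one of the two results from \cite{small2017} that extend Menasco's work, and is used as a black box in the proof of the main theorem in Section~3. So there is no in-paper proof to compare against. Your Menasco-style strategy is indeed the approach taken in \cite{small2017}.

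That said, your reverse direction has a genuine gap that is specific to the surface setting. In the last step you assert that the single remaining curve $\gamma\subset S$ \emph{bounds a disk} $E$ in $S$. On a sphere this is automatic; on a higher-genus $S$ it is not, since $\gamma$ could a priori be essential. What is missing is the observation that the projection surface is incompressible in $M$: the curve $\gamma$ bounds a disk on the sphere $\partial B$ whose interior is disjoint from $S$, and incompressibility then forces $\gamma$ to bound a disk in $S$. Without this, the argument does not produce the disk $E$ the statement requires. Two smaller issues: your innermost-disk sentence is garbled (a curve of $\partial B\cap S$ disjoint from the projection lies in a \emph{single} complementary face, not ``a union of complementary faces''); and you have conflated $\partial B\cap L$ (two points in $M$) with intersections of $\partial B\cap S$ with the projection on $S$. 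In Menasco's setup the curves $\partial B\cap S$ may cross the diagram many times via saddles at the bubbles, and the alternating hypothesis is what lets you eliminate those---this is exactly the part of Menasco's argument you have not reproduced.

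The forward direction is fine in outline, but note that ``minimality of crossing number for reduced alternating diagrams'' in a thickened surface is itself a nontrivial input; the paper cites it as Theorem~1.1 of \cite{AFLT}.
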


We call a reduced fully alternating link projection that has no such disks an {\it obviously prime projection}. Hence, the import of the theorem is that a fully alternating link in a thickened surface is prime if and only if a reduced fully alternating projection of it is obviously prime.

In Section 3, we provide necessary and sufficient conditions for a link diagram be realized as a prime, fully alternating link in its Turaev surface. We use this to show that every link, classical or virtual, has some diagram that generates a prime fully alternating link in $M$, which is therefore $tg$-hyperbolic and thus, has a unique hyperbolic volume associated to it. In fact, there are infinitely many such diagrams for the link. 

\begin{definition}
    Let $D$ be a Turaev prime diagram of a link $L$. The \emph{Turaev volume} of $D$, denoted $\vol_T(D)$, is the hyperbolic volume of the Turaev realization of $D$. 
\end{definition}

\begin{definition}
    \label{def:turaevvolume}
    For a link $L$ that may be classical or virtual, the \emph{Turaev volume} of $L$, denoted $\vol_{T}(L)$ is the minimum over all Turaev prime diagrams $D$ of $L$ of $\vol_T(D)$. For a classical link $L$, the \emph{classical Turaev volume}, denoted $\vol_{CT}(L)$, is taken to be the minimum over all classical diagrams of $L$. Since the set of hyperbolic volumes is well-ordered (see \cite{Thurston}), there is always a minimum.
\end{definition}

It is unknown at this time whether a hyperbolic alternating classical link has hyperbolic volume equal to its classical Turaev volume, but it is known in certain cases (see Examples 5 and 6 of Section 4), and we conjecture this to be the case. Moreover, in this case, we expect the classical Turaev volume equals the Turaev volume. In general, it is unclear if Turaev volume equals classical Turaev volume for nontrivial classical links, although we conjecture they are distinct for the trivial knot. See Example 8 of Section 4.

Since every link has infinitely many Turaev volumes associated to it, we can also define an invariant called the \emph{Turaev spectrum}.

\begin{definition}
    \label{def:turaevspectrum}
    For a link $L$ that may be classical or virtual, the \emph{Turaev spectrum} of $L$ is the ordered sequence of all Turaev volumes of diagrams of $L$. For a classical link $L$, the classical Turaev spectrum is the ordered sequence of all Turaev volumes corresponding to classical projections of $L$.
\end{definition}

Although we will not pursue it here, one could also consider the Turaev spectrum for each fixed genus.

In this paper, we show that these invariants are well-defined. Section 2 provides a discussion of the determination of orientability or nonorientability of Turaev surfaces. In Section 3, we prove that every knot and link, virtual or classical, has a diagram such that its Turaev realization is hyperbolic and that every classical knot or link has a classical diagram such its Turaev realization is hyperbolic, thereby making Turaev volume well-defined for all knots and links and classical Turaev volume well-defined for classical knots and links.  In Section 4, we provide explicit examples of classical Turaev volumes and some conjectures about both Turaev and classical Turaev volumes. At this time, no explicit Turaev volumes are known.

Throughout we use $v_{oct} \approx 3.6638 \dots$ to represent the volume of an ideal regular hyperbolic octahedron.

\section{Orientability of Turaev surfaces}

While all classical links have orientable Turaev surfaces corresponding to any classical projection, every link has projections that generate nonorientable Turaev surfaces.
In this section, we present necessary and sufficient conditions for orientability of the Turaev surface.  We begin with some definitions related to Gauss codes.

\begin{definition}
A \emph{generalized Gauss code} is a Gauss code where it is no longer required that each pair of entries corresponding to a given crossing must have one O and one U associated to them. A generalized Gauss code is \emph{orientable} if the pair of appearances of an entry does have one O and one U. Otherwise call the generalized Gauss code \emph{nonorientable}. 
\end{definition}

\begin{example}
$$1O^+2O^-1O^+2U^-$$ is a valid generalized Gauss code, though it is not a valid Gauss code since both occurrences of the crossing $1$ are over crossings. 
\end{example}

\begin{definition}
Given a Gauss code, define the \emph{Turaev code} associated to that code to be the generalized Gauss code obtained by changing the O and U labels to make the code alternating.
\end{definition}

\begin{example}\label{tur_exmp} The Turaev code associated to the Gauss code $$1O^+2O^-3U^+1U^+4O^-2U^-3O^+4U^-$$ is the alternating code $$1O^+2U^-3O^+1U^+4O^-2U^-3O^+4U^-.$$
\end{example}

Note that the Turaev code of a given Gauss code is orientable if and only if the number of entries between the two occurences of each crossing number in the code is even. A Gauss code satisfying this property is called {\it alternatable} in \cite{Kamada} and \cite{Viro}.

For instance, the code in Example \ref{tur_exmp} is nonorientable since the sequence $1U^+4O^-2U^-$ occurs between the two instances of crossing number 3. Also, note that there is an ambiguity in whether the Turaev code of a given Gauss code begins with an over crossing or an under crossing. There is no canonical choice for this. Nevertheless, this ambiguity is essentially the ambiguity in the orientation of the associated Turaev surface. Since the associated link living in a thickened surface is independent of a choice of orientation, though, this is not an issue for our purposes.


\noindent 

\begin{theorem}
\label{orientabilityTuraev}
   For a virtual knot diagram $D$, the following are equivalent.
 \begin{enumerate}[(i)]
        \item The Turaev code corresponding to $D$ is nonorientable.
       \item The Gauss code $G$ for $D$ has some crossing $i$ such that the two entries for $i$ are separated by an odd number of Gauss code entries (not counting the $i$ entries themselves).  
       \item The diagram $D$ has an associated Turaev surface which is nonorientable.
       \end{enumerate}
      
 \end{theorem}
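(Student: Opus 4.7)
The plan is to prove (i) $\Leftrightarrow$ (ii) directly from the definitions, then deduce (i) $\Leftrightarrow$ (iii) by translating orientability of the Turaev surface into a coboundary condition on the Turaev ribbon graph.

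For (i) $\Leftrightarrow$ (ii): since the Turaev code is obtained by replacing the O/U labels with a cyclic alternation, its label at position $k$ depends only on the parity of $k$. Hence the two occurrences of a given crossing receive the same Turaev-code label (making that crossing a witness to nonorientability) exactly when the two positions share parity, i.e., when the number of strictly intervening entries is odd.

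For (i) $\Leftrightarrow$ (iii): view $S_T(D)$ as the closed surface obtained by capping the boundary of the Turaev ribbon graph $\Gamma$, whose vertices are crossings, whose edges are the arcs between consecutive visits, and whose half-twists occur precisely on edges whose two endpoints share an O/U label. Orienting each vertex-disk and tracking the induced orientations at each band-end gives the standard ribbon-graph fact: $\Gamma$ (and hence the capped surface) is orientable iff the $\F_2$-valued twist function $h$ is a coboundary, i.e., $h(e_k) \equiv c(v_k) + c(v_{k+1}) \pmod{2}$ on each arc $e_k$ between positions $k$ and $k+1$, for some function $c$ on crossings. Writing $T_k \in \{0,1\}$ for the O/U label and $P_k = k \bmod 2$, one has $h(e_k) \equiv T_k + T_{k+1} + 1 \pmod{2}$, and the candidate cochain $c(v_k) := T_k + P_k$ satisfies the coboundary equation because $P_k + P_{k+1} \equiv 1 \pmod{2}$.

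The only remaining subtlety is that $c$ must descend to a well-defined function on crossings: at a crossing with visits at positions $p$ and $q$, this requires $T_p + P_p \equiv T_q + P_q \pmod{2}$, and since $T_p + T_q = 1$ in any valid Gauss code, the condition reduces to $P_p + P_q \equiv 1 \pmod{2}$, i.e., $p$ and $q$ have opposite parities. The linear recurrence $c'(v_{k+1}) - c'(v_k) \equiv h(e_k) \pmod{2}$ forces every coboundary witness to have the form $c'(v_k) = T_k + P_k + \mathrm{const}$, so no alternative choice of $c$ sidesteps this obstruction. Thus $\Gamma$ is orientable iff every crossing has visits at positions of opposite parity, which by the (i) $\Leftrightarrow$ (ii) step is the negation of (i). The chief obstacle is identifying the correct cochain $c(v_k) = T_k + P_k$; once that is in hand, the argument is essentially automatic.
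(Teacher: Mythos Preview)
Your proof is correct and takes a genuinely different route from the paper's. Both arguments dispatch (i)~$\Leftrightarrow$~(ii) immediately. For (i)~$\Rightarrow$~(iii) the paper notes directly that the knot arc between the two bad visits of a crossing traverses an odd number of half-twists and hence has a M\"obius-band neighborhood. For the hard direction (iii)~$\Rightarrow$~(ii), the paper argues the contrapositive by contradiction: assuming every crossing separates an even number of entries but the surface is nonorientable, it picks a minimal orientation-reversing simple cycle $\gamma$ in the ribbon graph, records which crossings $\gamma$ turns at versus passes straight through, and runs a somewhat delicate parity count on the induced partition of the Gauss code to obtain a contradiction. Your approach instead invokes the standard cohomological criterion for ribbon-graph orientability (twist cocycle $h$ is a coboundary), solves the recurrence $c(k{+}1)-c(k)\equiv h(e_k)$ along the Eulerian circuit given by the Gauss code to force $c(k)=T_k+P_k+\text{const}$, and then observes that well-definedness of $c$ on vertices is exactly the negation of~(ii). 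This is cleaner and more conceptual: it identifies the parity condition in~(ii) as literally the obstruction class of $h$, rather than extracting it from a bespoke cycle. The paper's argument stays closer to the geometry and avoids citing the ribbon-graph criterion, at the cost of more intricate bookkeeping. One small point worth making explicit in your write-up: the edges of the Turaev ribbon graph are precisely the arcs $e_0,\dots,e_{2n-1}$ between consecutive Gauss-code positions, so verifying $\delta c=h$ on these arcs really does verify it on all of $E(\Gamma)$, and this is what justifies the uniqueness of $c$ up to a constant.
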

 
 A similar result applies for links of two or more components but then, either there exists a component such that  both copies of a crossing number appear in the Gauss code of the component and separate off an odd number of entries, or there exists a component such that its Gauss code  has odd length.

\begin{proof} As mentioned previously, the equivalence between (i) and (ii) is immediate. Then (i) implies (iii) since a nonorientable Turaev code has a crossing that does not have a U on one appearance in the code and an O on the other. Hence the path along the knot between the two appearances must pass through an odd number of half-twists in order that the resultant knot on the Turaev surface be alternating. The path then has neighborhood on the surface that is a M\"obius band.

To prove that (iii) implies (ii) is more involved. Treating the Gauss code as cyclical, we prove the contrapositive.  Suppose that all pairs of entries corresponding to a crossing separate an even number of entries in the Gauss code but that the surface $S$ is nonorientable. Note that since the total number of entries in the Gauss code $G$ is even, if a pair separates an even number of entries to one side, it must separate an even number to the other side as well. Since $S$ is nonorientable, there must be a closed path through the graph of the ribbon-surface that is nonorientable. We take a minimal such path $\gamma$. In other words, $\gamma$ passes through each edge and vertex at most once and it passes through an odd number of half-twists in the ribbon surface. In particular, at each vertex of the graph, $\gamma$ must either turn to the right or left or go straight through the vertex. Let $a_1, a_2,\dots, a_n$ be the crossing labels on the vertices where $\gamma$ turns. Note they are all distinct. 

We count how many crossings $\gamma$ passes straight through between the vertices corresponding to $a_i$ and $a_{i+1}$ and call that $b_i$. Then the fact the neighborhood of the path $\gamma$ is nonorientable means that $\Sigma b_i$ must be odd. 
     
Let $C_i$ denote that union of the edges of the graph between $a_i$ and $a_{i+1}$ on the path.  Note that $C_i$ corresponds to a segment of contiguous entries of the Gauss code bounded by entries $a_i$ and $a_{i+1}$. Call that sequence $C_i’$. So segments $C_1’,\dots,C_n’$ appear disjointly in the Gauss code of $\gamma$, each of length $b_i+2$. Let $x_1, x_2,\dots, x_n$ correspond to the  lengths of the segments in the Gauss code that make up the complement of $C_1’, \dots, C_n’$ in the Gauss code.

Since there are an even number of entries in $G$, note that the sum $\Sigma b_i + \Sigma x_i$ is even. So $\Sigma b_i$ has same parity as $\Sigma x_i$.  Since $\Sigma b_i$ is odd, so is $\Sigma x_i$.

As mentioned previously, the pair of entries labelled $a_i$ separate $G$ into two segments of even length. For each $a_i$, choose one of them, denoted  $R_i$.

Let $b_i$ and $x_j$ be the lengths corresponding to two segments of the Gauss code that are adjacent to one another,  separated by $a_k$. Then each  $R_l$ will either include both of them or neither of them, except for $R_k$, which must include exactly one of them. 
Writing the sum $\Sigma R_i$ in terms of the $a_l$s and $x_l$s,   and the number of entries that appear between them, $b_i$  will  appear  a number of times that has distinct parity from the number of times that $x_j$ appears. Continuing around the Gauss code, using the fact that $b_i$'s alternate with $x_j$'s as we travel around, this implies that in $\Sigma R_j$, the number of times each $a_i$ appears will have the same parity as the number of times each $a_j$ appears and distinct parity from the number of times each $x_i$ appears. 

Since $\Sigma a_i$ is odd, it must be the case each $a_i$ has odd parity. So each $x_i$ has even parity, implying that $\Sigma x_i$ is even, a contradiction to the fact $\Sigma x_i$ is odd.


\end{proof}

\section{Turaev volume invariant for classical and virtual links}
\label{turaevvolume}
The purpose of this section is to define invariants of classical and virtual links derived from the tg-hyperbolic metrics associated to the projections of links.

Before doing so, we need to extend the proof given of Theorem \ref{alttg} in \cite{small2017} appropriately to the case of $S$ a sphere, projective plane or Klein bottle. We note that the case of a Klein bottle, although not explicitly included in Lemma 14 of \cite{small2017}, follows immediately by the same argument given there, which is to lift the link to a fully alternating link in $T \times I$, with $T$ a torus, which has been proved to be hyperbolic, and use the fact that the covering translation can be realized by an isometry by the Mostow-Prasad Rigidity Theorem. In the case that $S$ is a sphere, we must exclude an alternating 2-braid knot. But then any other connected prime alternating diagram is hyperbolic by seminal work of Menasco (\cite{Menasco}).

     In the case that $S$ is a projective plane, and $M$ is a twisted I-bundle over $S$, we can take a  double cover $M'$ of $M$ that is $S^2 \times I$, and such that $L$ lifts to a prime fully alternating link $L'$ in $M$. But we do need to exclude the possibility that $L'$ is a 2-braid or that $L'$ is not prime even though $L$ was. In Figure \ref{fig:projective2braid}, we see the only reduced alternating link projections in the projective plane that lift to a 2-braid diagram in the sphere. In the case that $L'$ is not prime, then work of \cite{Menasco} implies that there is a disk $E$ in the projection plane with boundary intersecting the projection at two points, with crossings to both sides of $\partial E$ in $S^2$. Then, since $L$ is prime,  there must have been a simple closed curve in the projective plane that crossed the projection once that lifts to $\partial E$. Excluding those possibilities, it is true that every prime fully alternating link in a thickened surface $M$ is $tg$-hyperbolic.
     
     \begin{figure}[htbp]
    \centering
    \includegraphics[width=0.3\textwidth]{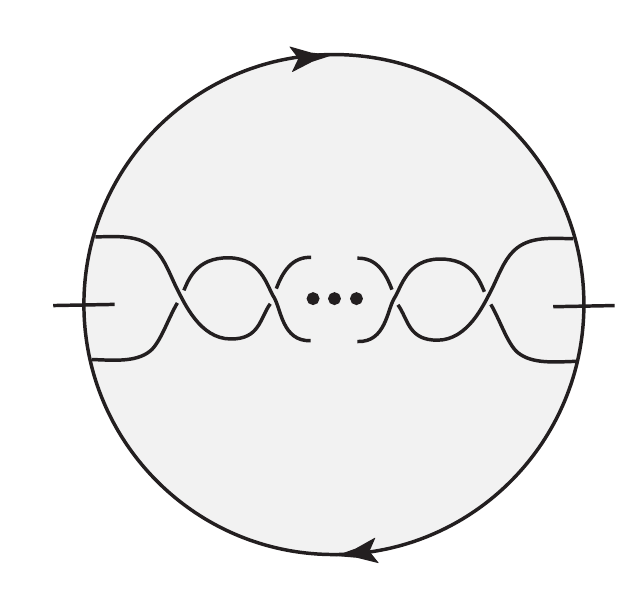}
    \caption{Fully alternating inks in $P^2$ that must be excluded for tg-hyperbolicity.}
    \label{fig:projective2braid}
\end{figure}

We also need several more definitions.

\begin{definition}
Given a (generalized) Gauss code $G$ treated as a cycle, a \emph{subcode} of $G$ is a proper consecutive sequence in the cycle such that if a given number appears in the subcode, its second copy in the Gauss code also appears in the subcode. If $G'$ is a subcode of $G$, we denote this as $G'\subset G$. Note that such subcodes come in pairs, the union of which is the entire code.
\end{definition}

\begin{example}Dropping the letters and signs, 
1234535421 has subcodes 345354 and 2112.
\end{example}

\begin{definition}Given a diagram of a knot, a \emph{virtualization} is the replacement of a crossing by two virtual crossings and the classical crossing as in Figure \ref{fig:virtualization}.(Virtualization is sometimes defined with the central crossing switched but we will mean it as depicted here.) 
\end{definition}

Virtualizations of a given knot have much in common with the original. For instance, in \cite{Kauff}, it was proved that two knots related through a sequence of such virtualizations have the same Kauffman/Jones polynomial.
 
      \begin{figure}[htbp]
    \centering
    \includegraphics[width=0.3\textwidth]{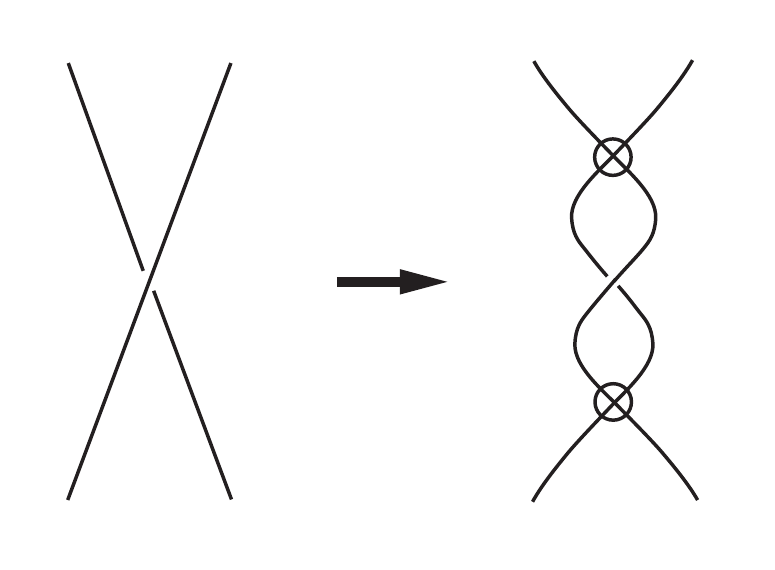}
    \caption{The virtualization of a crossing.}
    \label{fig:virtualization}
\end{figure}

We are now prepared to state the main result of this paper.

\begin{theorem}
Any link $L$ (which may be classical or virtual) has some diagram $D$ such that the corresponding Turaev surface-link pair is tg-hyperbolic. If $L$ is classical, then the diagram $D$ can be taken to be classical. Thus, every classical and virtual link has a Turaev volume and every classical link has a classical Turaev volume.
\end{theorem}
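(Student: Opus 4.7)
The plan is to deduce the theorem from Theorem \ref{alttg}: since every Turaev realization is fully alternating by construction, it suffices to produce, for each link $L$, a diagram $D$ whose Turaev realization is prime and whose Turaev surface is not one of the three configurations excluded by that theorem; in the classical case, $D$ must also be a classical diagram.

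The first step I would take is to translate Turaev primality into a combinatorial condition on the Gauss code. By Theorem \ref{primefullyalt}, non-primality of a reduced Turaev realization is witnessed by a disk $E$ on the Turaev surface whose boundary meets the projection transversely at two points with at least one classical crossing on each side of $\partial E$. Such a disk separates the cyclic Gauss code of $D$ into two segments, inside each of which every crossing label appears twice or not at all, so the data of such a bad disk is exactly the data of a proper subcode, as defined above, each of whose pieces contains at least one classical crossing. The task thus becomes: produce a diagram $D$ of $L$ whose Gauss code has no such ``bad'' subcode, and whose Turaev surface avoids the three exceptional configurations.

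Starting from any initial diagram $D_0$ of $L$, if a bad subcode of the Gauss code of $D_0$ is present, I would insert two crossings via a Reidemeister II move between two strands chosen so that one strand lies on each side of the would-be obstructive disk: the move adds two new Gauss entries whose occurrences straddle the boundary of the disk, each new label occurring once on each side, thereby breaking the corresponding subcode. Iterating, with the order chosen so that breaking a bad subcode does not create a shorter one, yields a Turaev prime diagram after finitely many R2 moves. To avoid the exceptional cases: the Turaev surface is $S^2$ exactly when $D$ is alternating in the plane, so whenever $D$ is an alternating $2$-braid (case (i)) we do one more R2 insertion to destroy alternation, raising the Turaev genus to at least $1$; cases (ii) and (iii) arise only when the Turaev surface is non-orientable (detected on the Gauss code by Theorem \ref{orientabilityTuraev}), and similarly a further R2 insertion can be chosen to push the non-orientable genus above one, exiting both.

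The main obstacle is the termination argument: one must verify that each R2 insertion placed to break one bad subcode cannot create a shorter bad subcode, so that the iteration cannot cycle indefinitely. This should follow from a length-based induction, using that any new subcode must contain both occurrences of both new labels and is therefore constrained to be large. A secondary subtlety, specific to the classical case, is the requirement that the R2 move be realizable planarly; this is handled by choosing the two strands to lie along a short planar arc of $D_0$.
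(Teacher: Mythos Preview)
Your proposal is correct and follows essentially the same route as the paper: translate Turaev primality into the absence of proper subcodes, break each bad subcode with a Reidemeister II move straddling the offending disk, and handle the finitely many exceptional Turaev surfaces separately. The paper resolves both of the concerns you flag (termination and planar realizability in the classical case) by making the RII move completely explicit---it follows one of the two boundary strands $w$ of the disk $E$ to the first classical crossing on each side and slides the cross-strand $q$ outside over the cross-strand $r$ inside along $w$---which keeps the move planar when $D$ is classical and visibly decreases the number of subcodes.
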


\begin{proof}By our extension of Theorem \ref{alttg} of \cite{small2017}, it is enough to prove that every link has a diagram $D$ such that the corresponding Turaev surface-link pair is prime, connected and fully alternating and is neither a 2-braid in a sphere nor a 2-braid in a projective plane.

    First, note that if the diagram we begin with is disconnected and/or not reduced, we can obtain an equivalent diagram that is connected and reduced by performing a finite set of Reidemeister I and II moves. So we assume from now on that the diagrams are reduced and connected. Then the fact the Turaev surface link pair is fully alternating comes from the construction.
    
    If the diagram is a classical 2-braid diagram, we can perform a Type II 
    Reidemeister move to obtain a new diagram that is not such. If our diagram is a 2-braid with one virtual crossing,  which would yield a projective plane Turaev surface and a 2-braid within it as in Figure \ref{fig:projective2braid}, we can perform a Type II Reidemeister move to to obtain a new diagram that is not such. Note that Theorem 4.2 of \cite{virtualknotsbook} states that a given Turaev surface-link pair can correspond to more than one virtual link, but the corresponding diagrams are related through detour moves and virtualizations. Thus, the only diagrams that yield a Turaev surface-link pair as in  Figure \ref{fig:projective2braid} are a 2-braid with one virtual crossing or a virtualization of it. So in the case of a diagram that is a virtualization of a 2-braid with one virtual crossing, we also apply a Type II Reidemeister move to obtain a new diagram which no longer generates this Turaev surface-link pair. The final case to consider is the trivial knot. If  we change one crossing in the standard projection of the trefoil knot, we obtain a nontrivial projection of the trivial knot, which works.
    
    Now, we have satisfied all of the necessary conditions except primeness. By Theorem \ref{primefullyalt}, this is equivalent to finding a reduced connected diagram such that there is no disk $E$ on the Turaev surface such that $\partial E$ intersects the link twice and contains crossings in its interior. In other words, it is obviously prime.
    
    Suppose that $D$ is a reduced connected diagram for a link $L$ such that the Turaev surface-link pair $\big(S_T(D), L_T(D)\big)$  is not obviously prime. Up to surface isotopy, there is a finite collection of disjoint disks on the surface that make the projection in the Turaev surface not obviously prime. Let $E'$ be one such disk on the Turaev surface. 
    
    Let $G$ denote the Gauss code associated to $D$ and $\tilde{G}$ denote the Gauss code associated to $\big(S_T(D), L_T(D)\big)$. The existence of $E'$ implies that $\tilde{G}$ has a corresponding nontrivial proper subcode obtained as we traverse that portion of $L_T(D)$ inside $E$. Since the Turaev construction preserves subcodes, $G$ must also have a corresponding subcode. That subcode is classical in the sense that it represents a portion of $L_T(D)$ that is a single arc in $L$ together possibly with entire components, that clearly exists in a disk on $S_T(D)$. It therefore corresponds to a portion of $L$ that exists in a disk $E$ in the plane such that it consists of a single arc and possibly a set of entire components. Although there can be other arcs of $L$ intersecting the disk, they can only have virtual crossings with the first portion.
    
    Because $\big(S_T(D), L_T(D)\big)$ is a connected reduced fully alternating diagram, there must be crossings of $L_T(D)$ inside and outside $\partial E'$. This means there are classical crossings of $D$ inside and outside  $\partial E$. Leaving $E$ along one of the two strands $w$ of $D$ that cross $\partial E$, continue along the strand until the first crossing is reached. Let $q$ be the cross-strand. Similarly, following $w$ inside $E$ until it hits the first classical crossing, let $r$ be the cross-strand here. Now do a Type II Reidemeister move of a small piece of $q$ across a small piece of $r$ as in Figure \ref{fig:makingprime}. Note that this destroys the twice-crossed circle that was the boundary of $E$. In the case that $D$ was a classical diagram, the resulting diagram $D'$ is also classical. In the case that $D$ is virtual, note that the Reidemeister Type II move may have to pass over arcs that have virtual crossings with $w$. In this case, make the resultant crossings virtual.
    
    \begin{figure}[htbp]
    \centering
    \includegraphics[width=0.7\textwidth]{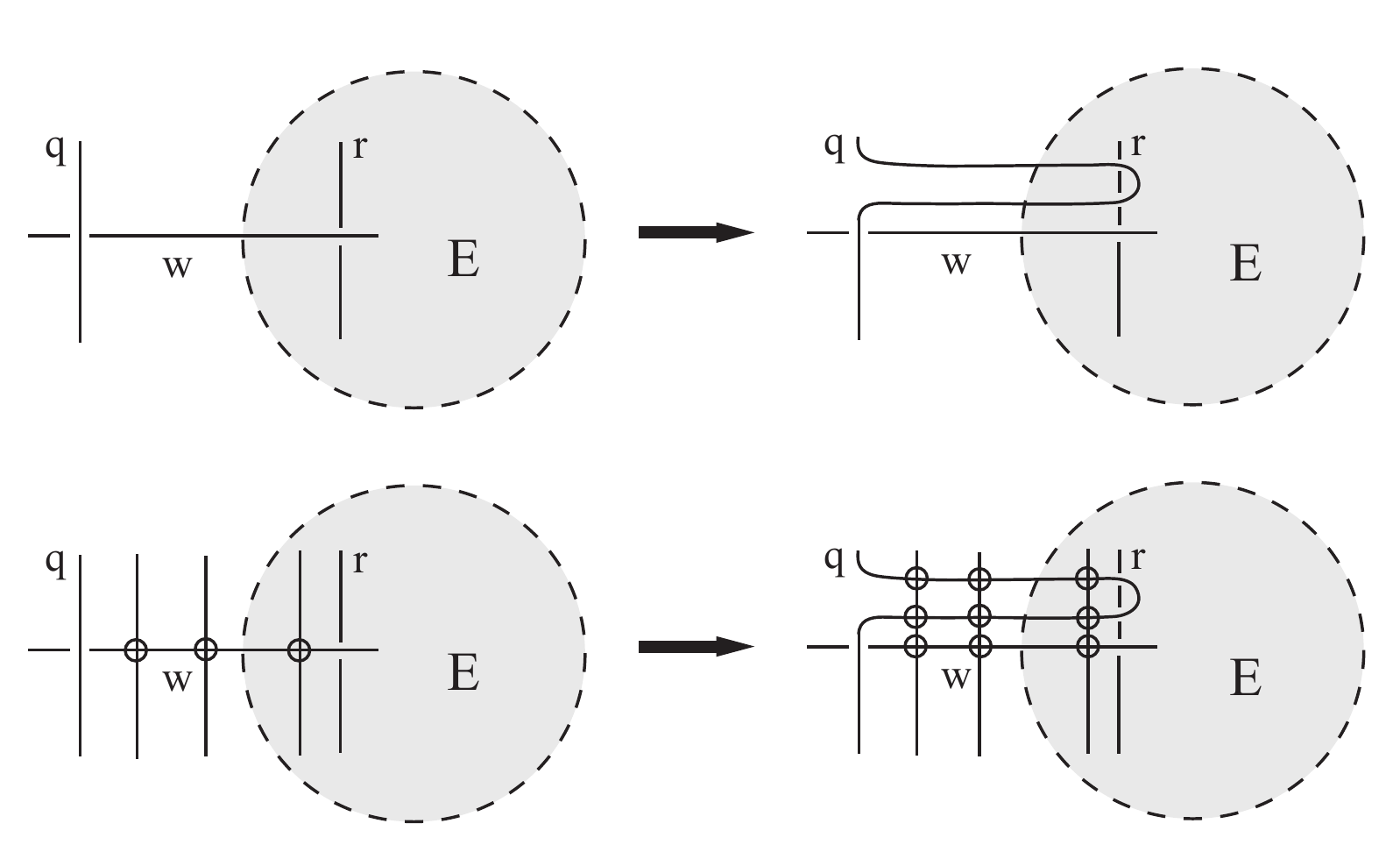}
    \caption{Doing a Type II Reidemeister move to obtain an obviously prime fully alternating Turaev surface-link pair.}
    \label{fig:makingprime}
\end{figure}

The end result is that we have reduced the number of subcodes in $\tilde{G}$. By induction, we obtain a diagram $D''$ of $L$ such that its Turaev surface-link pair is fully alternating, reduced, connected and prime. If the original diagram was classical, the new diagram is classical as well. Hence, the surface-link pair is tg-hyperbolic.

    
    
\end{proof}

We can thus define Turaev volume as in Definition~\ref{def:turaevvolume}. There are several questions one would like to ask.

\begin{question}
    For a nontrivial classical link $L$, does the Turaev volume equal the classical Turaev volume?
\end{question}

\begin{question}
Is the hyperbolic volume of a hyperbolic alternating classical link equal to  the Turaev volume for that link?
\end{question}

This last question seems highly likely, since such a link has a realization with Turaev genus 0 with Turaev volume equal to the hyperbolic volume of the link. For a non-alternating hyperbolic classical link, it seems highly unlikely, since the least volume of a Turaev realization will be for a surface of higher genus than corresponds to its hyperbolic volume in the 3-sphere. 


\begin{lemma}
    For any classical or virtual link $L$, there are infinitely many distinct hyperbolic links in thickened surfaces arising as the Turaev surface of some diagram of $L$.
\end{lemma}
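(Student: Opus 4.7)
The plan is to apply the preceding theorem to obtain one Turaev prime diagram $D_0$ of $L$, and then inductively construct an infinite family $\{D_n\}_{n \geq 0}$ of Turaev prime diagrams of $L$ whose Turaev realizations have strictly increasing classical crossing number. Given $D_n$, I would construct $D_{n+1}$ as follows: pick two parallel arcs lying inside some face of the Turaev realization of $D_n$, perform a reverse Reidemeister II move to create a small bigon with two new classical crossings, and then apply a Reidemeister III move to slide one of those new crossings across an adjacent classical crossing, breaking the bigon. The resulting diagram has two more classical crossings than $D_n$, represents the same virtual link $L$, and by construction contains no bigon involving the two new crossings. If the resulting diagram happens not to be Turaev prime, apply the primeness-fixing procedure from the proof of the preceding theorem, positioning its Reidemeister II moves on disks disjoint from the two new crossings so that they persist.

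To conclude that infinitely many of the resulting tg-hyperbolic manifolds are pairwise non-homeomorphic, I would track the classical crossing count $c_n$ of $D_n$, which equals the number of crossings of the fully alternating projection $L_T(D_n)$ on $S_T(D_n)$. By construction $c_n \to \infty$. Invoking the thickened-surface analog of Lackenby's inequality, the hyperbolic volume of a prime reduced fully alternating link in a thickened surface (outside the exceptional cases of Theorem \ref{alttg}) admits a lower bound of the form $v_{oct}(c - k)$ for some constant $k$. Therefore $\vol_T(D_n) \to \infty$, and since the Jorgensen--Thurston theorem implies that only finitely many hyperbolic $3$-manifolds have volume below any fixed bound, infinitely many of the $D_n$ must yield non-homeomorphic hyperbolic $3$-manifolds.

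The main obstacle is designing the local modification so that the primeness-fixing procedure does not undo the two newly added crossings. A careful ordering of the operations is required: the Reidemeister III step must sufficiently separate the new crossings before the primeness fix is applied, and the region acted on by the primeness fix must be chosen disjoint from these crossings. The observation that the primeness-fixing procedure of the preceding theorem itself only adds crossings rather than removing them should ensure that $c_{n+1} > c_n$ at every step, and a case analysis in the spirit of that theorem's proof should suffice to handle the boundary cases (e.g., low-crossing diagrams of the unknot, where the initial modification must be chosen so as not to land in the excluded $2$-braid configurations of Theorem \ref{alttg}).
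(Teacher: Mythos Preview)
Your argument has a genuine gap in the volume step. Lackenby's lower bound for hyperbolic alternating links is linear in the \emph{twist number} $t(D)$, not in the crossing number $c(D)$; there is no inequality of the form $\vol \geq v_{oct}(c-k)$ for reduced prime alternating diagrams. The twist knots already witness this in $S^3$: they are prime, reduced, alternating, have $c \to \infty$, and yet their volumes stay below $v_{oct}$. The same phenomenon persists on higher-genus projection surfaces. Your construction does not control twist number: the Reidemeister~II moves used in the primeness-fixing procedure of the preceding theorem create bigons, and even your RII+RIII step need not prevent the two new crossings from sitting in twist regions with pre-existing crossings. So $c_n \to \infty$ does not force $\vol_T(D_n) \to \infty$, and the conclusion does not follow. (As a smaller point, your invocation of J{\o}rgensen--Thurston is also misstated: there are infinitely many hyperbolic $3$-manifolds of volume below any given bound, e.g.\ Dehn fillings of the figure-eight complement. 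What is true, and would suffice if the volumes really diverged, is simply that unbounded volumes give infinitely many distinct values.)

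The paper sidesteps all of this with a one-line trick: connect-sum the diagram repeatedly with the unknot diagram obtained by switching one crossing of the standard trefoil. Each such composition raises the Turaev genus by one, so the associated thickened surfaces are pairwise non-homeomorphic, and the resulting Turaev surface-link pairs are automatically distinct. No volume estimate is needed for the lemma itself; the Miyamoto bound $\vol \geq (2g-2)v_{oct}$ is only used afterward, in the corollary, to see that the associated volumes are unbounded. If you want to repair your approach, the cleanest fix is to arrange that your modification increases the Turaev genus rather than merely the crossing count.
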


\begin{proof}
Given a diagram for a link, we can compose with the diagram for the trivial knot obtained by changing one crossing in the standard diagram of the trefoil. Each such composition increases the genus of the corresponding Turaev surface.
\end{proof}

\begin{corollary}
    There are infinitely many distinct Turaev volumes associated to any link $L$. 
\end{corollary}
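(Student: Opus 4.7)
The plan is to deduce the corollary from the preceding Lemma together with a classical finiteness property of hyperbolic volumes. First, I would apply the Lemma to obtain an infinite sequence $\{(M_n, L_n)\}$ of pairwise distinct tg-hyperbolic links in thickened surfaces, each realized as the Turaev surface-link pair of some diagram $D_n$ of $L$. By the construction used in the Lemma's proof (iterated composition with the non-standard trivial-knot diagram from the standard trefoil), the Turaev genera $g_T(D_n)$ strictly increase. If necessary, I would apply the Type II Reidemeister-move procedure from the main theorem of this section to each $D_n$ to ensure it is Turaev prime, and observe that this procedure does not alter the topology of the Turaev surface, so the corresponding $\vol_T(D_n)$ is a well-defined positive real number.

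Next, I would argue that the resulting finite-volume hyperbolic $3$-manifolds (the complements $M_n \setminus L_n$, appropriately capped off or with boundaries removed according to the three cases of tg-hyperbolicity) are pairwise non-homeomorphic. For Turaev genus at least $2$, the totally geodesic boundary is a topological invariant whose Euler characteristic recovers $g_T(D_n)$, so distinct genera give distinct hyperbolic $3$-manifolds. Only finitely many terms of the sequence fall into the exceptional small-genus cases (sphere, projective plane, torus, Klein bottle), and these may simply be discarded, leaving an infinite subsequence of pairwise non-homeomorphic complete finite-volume hyperbolic $3$-manifolds.

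Finally, I would invoke the Thurston--J\o rgensen theorem: the set of volumes of complete finite-volume hyperbolic $3$-manifolds is well-ordered (of order type $\omega^\omega$), and each individual volume value is attained by only finitely many non-homeomorphic such manifolds. Consequently, any infinite family of pairwise non-homeomorphic complete finite-volume hyperbolic $3$-manifolds must realize infinitely many distinct volume values. Applied to the complements in the sequence above, this yields infinitely many distinct values in $\{\vol_T(D_n)\}$, which is the corollary.

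The step that requires the most care, though it is bookkeeping rather than a serious obstacle, is verifying that distinct Turaev genera actually produce non-homeomorphic hyperbolic $3$-manifolds in the presence of the three boundary conventions used in the definition of tg-hyperbolicity. The genuine content of the argument lies entirely in the Lemma and in the finite-multiplicity statement for hyperbolic volumes; the corollary is then a one-line combination of the two.
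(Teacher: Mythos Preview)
Your argument is correct, but the paper takes a shorter and more quantitative route. Rather than arguing that the manifolds are pairwise non-homeomorphic and then appealing to the finite-multiplicity part of Thurston--J\o rgensen, the paper simply invokes Miyamoto's lower bound: for a tg-hyperbolic link complement in a thickened surface of genus $g>1$ (orientable or not, with the appropriate interpretation of genus in the nonorientable case), the volume is at least $(2g-2)v_{oct}$. Since the Lemma produces diagrams whose Turaev genera increase without bound, the associated Turaev volumes are unbounded, and hence infinitely many of them are distinct.

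Your approach buys a cleaner logical separation---you only need ``infinitely many non-homeomorphic manifolds'' plus a black-box finiteness statement---but it forces you to sort out the bookkeeping about boundary conventions and to invoke (or extend to the totally-geodesic-boundary setting) the finite-multiplicity theorem. The paper's approach avoids all of that by giving an explicit, genus-dependent volume floor, which is both simpler here and yields the additional information that the Turaev volumes tend to infinity.
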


\begin{proof}
As the genus increases, so must the corresponding volumes, as follows. By Miyamoto (\cite{Miyamoto}, paragraph after the proof of Theorem 5.2), we know that the hyperbolic volume of the complement of a link in a thickened orientable surface of genus $g >1$ is at least $(2g-2) v_{oct}$. In the case of a link in a twisted I-bundle over a nonorientable surface of genus $g > 1$, the boundary is a single totally geodesic orientable surface of twice the Euler characteristic, again yielding a volume of at least $(2g -2) v_{oct}$. 
\end{proof}

\noindent We can thus define Turaev spectrum as in Definition~\ref{def:turaevspectrum}.

\begin{theorem}
Let $L$ be a link, classical or virtual. Then $L$ has a non-discrete, well-ordered Turaev spectrum. 
\end{theorem}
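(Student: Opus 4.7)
My plan is to prove the two assertions---well-orderedness and non-discreteness of the Turaev spectrum---separately.

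\textbf{Well-ordering.} Each element of the Turaev spectrum is, by construction, the hyperbolic volume of a complete finite-volume hyperbolic 3-manifold, namely the Turaev realization of a Turaev-prime diagram of $L$, which exists and is tg-hyperbolic by the main theorem of Section \ref{turaevvolume}. The Thurston--J{\o}rgensen theorem asserts that the set of all such volumes forms a well-ordered subset of $\mathbb{R}$ of order type $\omega^\omega$. Any subset of a well-ordered set is itself well-ordered, so the Turaev spectrum inherits this property.

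\textbf{Non-discreteness.} The preceding Corollary establishes that the Turaev spectrum is infinite, but this is insufficient by itself, since $\mathbb{N}\subset\mathbb{R}$ is infinite yet discrete. To produce an accumulation point of the spectrum in $\mathbb{R}$, I plan a hyperbolic Dehn filling argument. Fix a Turaev-prime diagram $D$ of $L$. At a suitable edge of $D$, insert a sequence of $n$ cancelling Reidemeister II bigons to obtain a family of diagrams $D_n$, each still representing $L$. After possibly applying the primeness-restoration move from the proof of the main theorem of Section \ref{turaevvolume}, each $D_n$ is Turaev-prime with tg-hyperbolic Turaev realization $M_n$. By analogy with the standard crossing-circle augmentation for twist regions, I expect the family $\{M_n\}$ to arise as Dehn fillings of a fixed cusped hyperbolic 3-manifold $M_\infty$ along a distinguished extra cusp whose filling slope has geodesic length growing with $n$. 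Thurston's Hyperbolic Dehn Surgery Theorem then gives $\vol(M_n)<\vol(M_\infty)$ together with $\vol(M_n)\to\vol(M_\infty)$, so $\vol(M_\infty)$ is an accumulation point of the Turaev spectrum in $\mathbb{R}$.

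\textbf{Main obstacle.} The hardest step will be rigorously exhibiting $M_\infty$ as a cusped hyperbolic 3-manifold and verifying that the $M_n$ are Dehn fillings of it along a single cusp with slopes of length tending to infinity. This requires a careful local analysis of how inserting a cancelling Reidemeister II bigon at a prescribed location modifies the Turaev surface and the complementary metric. I anticipate that $M_\infty$ can be described as the Turaev realization of an augmented diagram for $L$ with an extra closed curve encircling the inserted twist region; filling this extra cusp with slope linear in $n$ should recover $M_n$. A secondary issue is maintaining Turaev-primeness and ensuring distinctness of the $M_n$ for distinct $n$, which can be addressed by iterating the primeness-restoration procedure of Section \ref{turaevvolume} together with Mostow rigidity to distinguish the fillings.
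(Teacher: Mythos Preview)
Your proposal follows the same two-part strategy as the paper: well-ordering from the Thurston--J{\o}rgensen theorem, and non-discreteness by constructing a sequence of diagrams whose Turaev realizations are Dehn fillings of a fixed cusped manifold.  The essential difference is in the construction of the diagram family.  The paper inserts $n$ Reidemeister~I kinks along an arc and then performs a single Reidemeister~II move to ``lock'' them (so they do not simplify on the Turaev surface); a one-line count of $A$-state and $B$-state circles then shows that the Turaev genus of $D_n$ is independent of $n$.  Consequently all the Turaev realizations live in the \emph{same} thickened surface and differ only by a lengthening chain of bigons, so the crossing-circle/Dehn-filling description is immediate and no primeness-restoration step is needed.

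Your variant with $n$ cancelling RII bigons is in the same spirit, but the ``main obstacle'' you flag is genuine and is precisely the point where the paper's construction is sharper.  Stacking RII bigons introduces many consecutive over--over (and under--under) segments, hence many half-twists in the Turaev ribbon; without a state-circle computation you do not yet know that the Turaev surface is independent of $n$ (or even that it remains orientable), which is what is needed for the $M_n$ to be fillings of a single $M_\infty$.  The paper's RI$+$RII trick is chosen exactly to make that verification trivial.  Your plan is salvageable, but you would have to carry out the analogous genus-stability check for your move, and it is not as clean.
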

\begin{proof}
Since the set of volumes of hyperbolic 3- manifolds are well-ordered, any subset has a least element. So we need only prove the collection is not discrete.

Let $D$ be a diagram of $L$ that lifts to a hyperbolic link in its Turaev surface. Pick any arc $\alpha$ on any strand of $D$. We construct the (equivalent) virtual diagram $D_n$ as follows. Apply $n$ RI moves to $\alpha$. Then, as indicated in Figure~\ref{fig:indiscrete-spectrum-dn}, apply an RII move to $\alpha$ (this ensures that the application of the $n$ RI moves cannot be undone when we lift to the Turaev surface). Note that $D_n$ admits $n+2$ more crossings than $D$, has the same number of closed curves in its $B$-state, and has $n$ more closed curves in its $A$-state (corresponding to the $n$ new crossings added) as in Figure \ref{fig:indiscrete-spectrum-statecurves}. This means that $g_T(D_n) = g_T(D_{n-1})$. The surface-link pairs corresponding to $D_1, D_2, \dots$ have identical diagrams except for the lengthening sequence of bigons. All of the links in the thickened surface can be obtained by Dehn filling an additional trivial component in the thickened surface that wraps around the sequence of bigons. By work of Thurston (see \cite{Thurston}), the sequence of volumes of the sequence must approach the volume of the link with the additional trivial component from below.
\begin{figure}[htbp]
    \centering
    \includegraphics[width=0.4\textwidth]{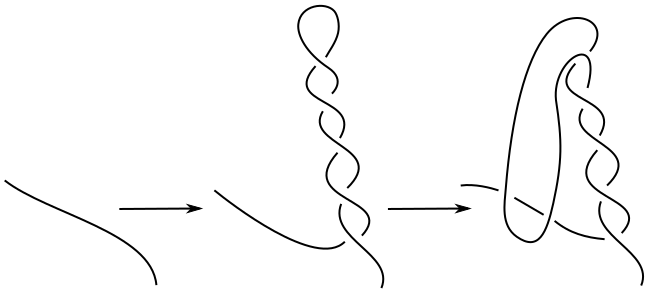}
    \caption{Construction of $D_5$ on some arc $\alpha$}
    \label{fig:indiscrete-spectrum-dn}
\end{figure}

\begin{figure}[htbp]
    \centering
    \includegraphics[width=0.3\textwidth]{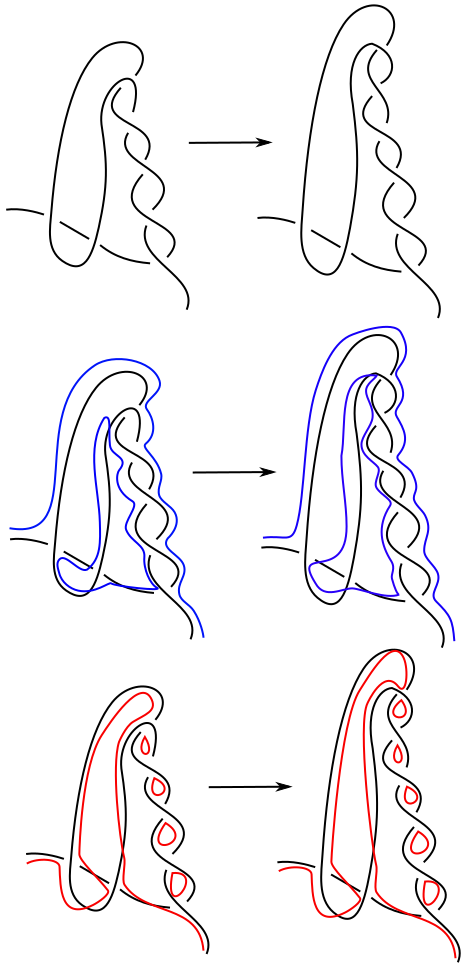}
    \caption{Adding twists preserves the amount of $B$ state circles (blue) and increases the amount of $A$ state circles (red) by one.}
    \label{fig:indiscrete-spectrum-statecurves}
\end{figure}
\end{proof}

\begin{remark}
Note that the Turaev spectrum does not contain the volume to which the volumes in this sequence limit, as the diagram with the trivial component upon which we are doing surgeries does not correspond to the link in question.
\end{remark}

\begin{example}
    Figure \ref{fig:fakeunknottwists} shows the first two in a sequence of unknot diagrams with Turaev volumes that limit toward the volume $14.9004553215$, which corresponds to the volume of the link depicted in  Figure \ref{fig:limitknot}.
   Table~\ref{tab:unknotvolumes} shows the volumes of the first $10$ diagrams in this sequence. 
    
     \begin{figure}[htbp]
        \centering
        \includegraphics[width=0.1\textwidth]{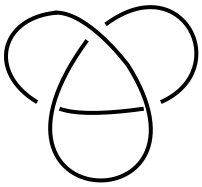}\hspace{0.5cm}\includegraphics[width=0.1\textwidth]{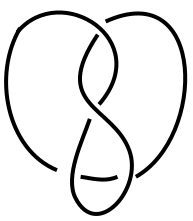}
        \caption{Unknot diagrams with 1 and 2 twists}
        \label{fig:fakeunknottwists}
    \end{figure}
    
    \begin{table}[htbp]
        \centering
        \begin{tabular}{|c|c|}\hline 
            Twists & $\vol_T(D)$\\ \hline
            1 & 9.503403931\\ \hline
            2 & 12.07764776\\ \hline 
            3 & 13.2804231421\\ \hline
            4 & 13.8804968156\\ \hline
            5 & 14.206363788\\ \hline
            6 & 14.399452630\\ \hline
            7 & 14.522417584\\ \hline
            8 & 14.6052962032\\ \hline
            9 & 14.663716611\\ \hline
            10 & 14.7064051972\\ \hline
            \vdots & \vdots \\ \hline
        \end{tabular}
        \caption{Converging unknot volumes}
        \label{tab:convergingunknotvolumes}
    \end{table}
   
   \begin{figure}[htbp]
        \centering
        \includegraphics[scale =.5]{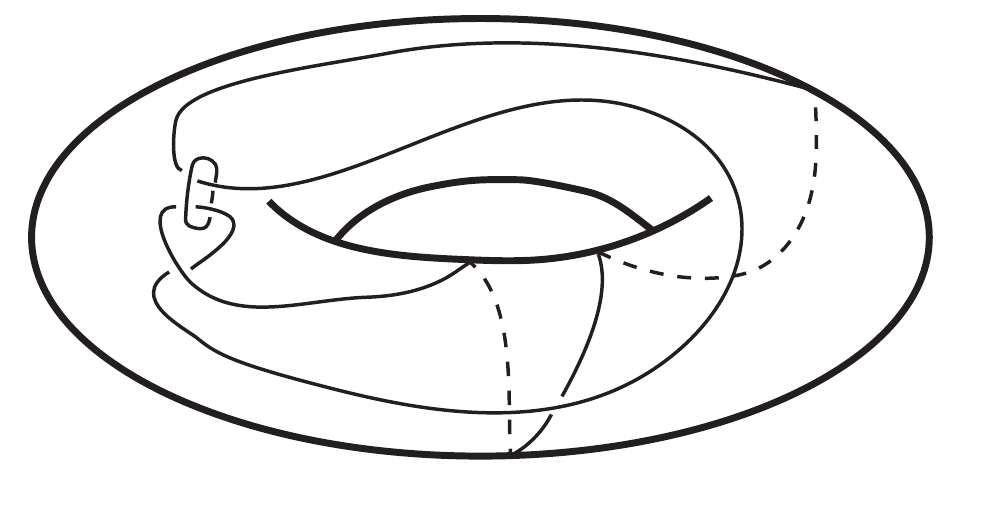}
        \caption{The sequence of diagrams of trivial knots have Turaev volumes limiting to the volume of this link.}
        \label{fig:limitknot}
    \end{figure}
    
\end{example}

\begin{theorem}
The mapping taking virtual knots to Turaev volume is finite-to-one.
\end{theorem}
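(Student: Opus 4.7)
The plan is, for any fixed $v > 0$, to show that only finitely many virtual knots $L$ satisfy $\vol_T(L) = v$. Suppose $L$ is such a virtual knot, with a Turaev prime diagram $D$ realizing $\vol_T(D) = v$. The strategy has three steps: bound the topology of the Turaev surface, bound the Turaev complement among hyperbolic $3$-manifolds, and recover the virtual knot from that $3$-manifold.

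For the first step, Miyamoto's inequality as used in the preceding corollary gives $g_T(D) \leq v/(2v_{oct}) + 1$, restricting $S_T(D)$ to finitely many topological types. For the second step, since a virtual knot has at least one component, the complement $N := M_T(D) \setminus L_T(D)$ is a cusped finite-volume hyperbolic $3$-manifold, possibly also with totally geodesic surface boundary. The J\o rgensen--Thurston theorem then asserts that the collection of such $3$-manifolds of volume at most $v$ is finite, so $N$ belongs to a finite set $\mathcal{N}_v$ depending only on $v$.

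The main obstacle is the third step: showing that each $N \in \mathcal{N}_v$ gives rise to only finitely many virtual knots. The totally geodesic components of $\partial N$ are intrinsic to its hyperbolic metric by Mostow--Prasad rigidity, and they determine the surface $S$, hence the thickened surface $M_T$. To recover $L_T$, one must Dehn fill the cusp tori of $N$ so that the result is $M_T$. Since $M_T$ admits no hyperbolic structure, Thurston's hyperbolic Dehn surgery theorem excludes all but finitely many slopes on each cusp as candidates for a filling yielding $M_T$; a further topological argument, controlling the number of distinct links in a thickened surface with a fixed homeomorphism type of complement (a finiteness statement in the spirit of Gordon--Luecke for links in $3$-manifolds with boundary), bounds the number of resulting pairs $(M_T, L_T)$. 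Each such pair determines a unique virtual knot via the equivalence between links in thickened surfaces modulo stabilization and virtual knots recorded at the start of the paper. Combining these finite counts yields the theorem.
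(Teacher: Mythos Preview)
Your first two steps (bounding the genus via Miyamoto, then bounding the number of complements via J\o rgensen--Thurston) agree with the paper. The genuine gap is your final step. You assert that the pair $(M_T, L_T)$ ``determines a unique virtual knot via the equivalence between links in thickened surfaces modulo stabilization and virtual knots recorded at the start of the paper.'' This conflates two different constructions. The link $L_T(D)$ in the thickened Turaev surface is by construction \emph{fully alternating}; viewed through the equivalence of Theorem~1.1, it represents some virtual knot, but in general \emph{not} the virtual knot $L$ you started with. For instance, if $L$ is a nonalternating classical knot, any Turaev realization of a minimal diagram lives in a surface of positive genus and represents a virtual knot different from $L$. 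So knowing $(M_T, L_T)$ does not by itself recover $L$, and nothing in your argument bounds how many distinct virtual knots $L$ can share a given Turaev realization.

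The paper handles this with two further ingredients that exploit the Turaev construction directly. First, since $L_T$ is fully alternating on $S_T$, Theorem~1.1 of \cite{AFLT} fixes the crossing number of any reduced alternating projection, so only finitely many such projections exist. Second, and this is the step you are missing, Theorem~4.2 of \cite{virtualknotsbook} says the virtual diagrams producing a \emph{given} Turaev surface-link projection are exactly those obtained from one another by detour moves and virtualizations; since virtualization is a local move at each of the finitely many crossings, only finitely many virtual knots arise this way. Your Dehn-filling/Gordon--Luecke sketch in step~3 is plausible as an alternative route from complement to surface-link pair, but it cannot replace this last finiteness statement, which is about the Turaev map itself rather than about hyperbolic geometry.
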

\begin{proof}

 As mentioned previously, \cite{Miyamoto} implies the volume is at least $(2g-2) v_{oct}$. 
    Hence, for a given volume there are only finitely many genera of surfaces $S$ such that the complement of a link in $M$ can have that volume. Once we have specified the surface, work of Thurston \cite{Thurston} implies that there are at most a finite number of links in the surface with a given volume. Each such link, which is fully alternating, has a finite number of reduced alternating projections. This follows from the fact that any such projection must have the same number of crossings by Theorem 1.1 of \cite{AFLT}.
 Each such projection can only come from a finite number of virtual link diagrams, including a given link diagram, and detour moves and virtualizations of it by Theorem 4.2 of \cite{virtualknotsbook}.

\end{proof}



\section{Some Turaev volumes}
We can determine the classical Turaev volume for several types of knots and links. 

\begin{example}
    Any classical hyperbolic alternating knot $K$ with volume less than $v_{oct}$ has classical Turaev volume equal to its hyperbolic volume $vol(K)$. For instance, this holds for the twist knots $4_1,5_2,6_1$, $7_2$,$\ldots$ 

 A classical knot $K$  has orientable Turaev surface for any classical diagram. By \cite{Miyamoto}, the volume of the complement of a link in a thickened orientable genus $g\geq 2$ surface is at least $(2g-2)v_{oct}$ and since $g\geq 2$, this is at least $2v_{oct}$.  For any genus one Turaev surface $S_T(D)$, the volume of $S_T(D) \times I \setminus L_T(D)$ is the volume of $S^3\setminus (H \cup L_T(D))$ where $H$ is a Hopf link. Since $H\cup L_T(D)$ is a $3$-component link, the volume is at least $v_{oct}$. This is true because in \cite{Agol}, Agol proved that the smallest orientable 2-cusped hyperbolic manifold has volume $v_{oct}$. If a 3-cusped manifold had volume less than this, high surgery on one cusp would also have volume less than $v_{oct}$, contradicting Agol's result.  Thus to obtain a volume less than $v_{oct}$, the Turaev surface must be a sphere. A reduced classical non-alternating diagram of $K$ yields a higher genus Turaev surface. So the only possible classical diagram that yields a spherical Turaev surface is an alternating diagram, all of which yield $K$ as the knot on the spherical Turaev surface with volume $vol(K)$.
 
 Note that although there is a 12-crossing knot (12n0242) that has the same volume as the $5_2$ knot, which is $2.828\dots$, it is not alternating, and therefore we do not expect its Turaev volume to be $2.828\dots$, since it will not have a minimal genus Turaev surface of genus 0.
 
 \end{example}
 
\begin{example}
    For classical alternating two component links with classical hyperbolic volumes less than $2v_{oct}$, the classical Turaev volume equals the hyperbolic volume. For instance, this includes the links $5_1^2$, $6_2^2$, $6_3^2$,$7_6^2$, $7_1^2$,$7_2^2$,$7_3^2$,$7_4^2$, and $8_2^2$.

To see this, let  $L$ be a classical alternating two-component link with vol$(S^3\backslash L) < 2v_{oct}$. 
 For classical diagrams with Turaev genus at least $2$, the volume of $S_T(D)
 \times I \setminus L_T(D)$ is again at least $2v_{oct}$ by \cite{Miyamoto}.

Furthermore, in \cite{Yoshida}, it was proved that the smallest volume of a $4$-cusped orientable hyperbolic 3-manifold is $2v_{oct}$. If some classical diagram of $L$ lifts to $L'$ on an orientable genus one Turaev surface $T$, the volume of $T\backslash L'$ equals the volume of $S^3 \setminus (L'\cup H)$ where $H$ is an appropriately linked Hopf link, and since this will have four cusps,  its volume is at least $2v_{oct}$. 

Then, $L$ can only have classical Turaev volume less than $2v_{oct}$ in its genus $0$ Turaev surfaces. To have a corresponding genus 0 Turaev surface, the diagram must be alternating, and the Turaev surface-link pair, after capping off the two spherical boundaries is in fact $L$ in $S^3$. 

Note however, that if we want to calculate the Turaev volume, we must consider virtual diagrams of $L$ as well. Although we can eliminate any nonorientable Turaev surfaces of genus at least 2 because the volume will be too large, we cannot eliminate the projective plane, the Klein bottle or the nonorientabe surface of genus 3/2 and prove that there is not a smaller volume Turaev surface-link pair. 

\end{example}

\begin{example}
We conjecture that the trivial knot has classical Turaev volume equal to $9.5034 \dots$, corresponding to a projection that is the standard projection of the trefoil knot with one crossing switched. However, this cannot be its Turaev volume, as we can take the projection obtained from the trivial projection by doing a virtual Type I  Reidemeister move followed by a classical Type II Reidemeister move, as in Figure \ref{fig:trivialvirtual}. The resulting projection does yield a Turaev surface-link pair that is a hyperbolic knot in a Klein bottle with volume $2v_{oct} = 7.3277247535 \dots$. We conjecture that this is its Turaev volume.

Note that the standard figure-eight knot projection with any two crossings that do not share a bigon made virtual shares this same Turaev-surface link pair, as it can be obtained from this projection of the trivial knot by virtualization and detour moves. So we conjecture that it also has Turaev volume $2v_{oct}$.

 \begin{figure}[htbp]
    \centering
    \includegraphics[width=1.0\textwidth]{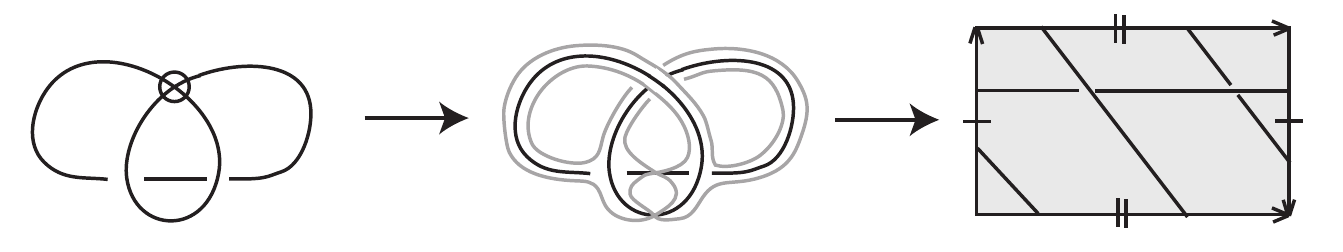}
    \caption{Finding the Turaev surface-link pair for this virtual projection of the trivial knot.}
    \label{fig:trivialvirtual}
\end{figure}

The trivial link of two components has a projection obtained by doing a single classical Type II Reidemeister move of one component over the other. The resulting Turaev surface-link pair is a torus with a link projection of two crossings upon it. It is hyperbolic with a volume of $6.089\dots$, less than the conjectured Turaev volume of the trivial knot.

The virtual figure-eight knot (obtained by making one crossing of the classical figure-eight knot projection virtual) is a knot of Turaev genus 1/2 with Turaev volume $2.66674478345\dots$ for this projection. We conjecture this is its Turaev volume. This is the lowest Turaev volume for a non-classical virtual knot yet discovered. (The standard projection of the virtual trefoil yields a non-hyperbolic Turaev surface-link pair.)


\end{example}

\begin{example} 
In Table 3, we see three classical projections of the trefoil knot, which itself is not hyperbolic. The least volume occurs for the first projection, and we conjecture that the corresponding volume is the classical Turaev volume for the trefoil. 

We expect this is also the Turaev volume, since for simple examples of projections of the trefoil that include virtual crossings, the volume is much larger.
\end{example}
\begin{table}[!t]
    \centering
    \begin{tabular}{|c|c|c|}
        \hline
        Unknot Diagram &  Turaev Genus & Volume \\
        
        \hline
        \begin{minipage}{.15\textwidth}
        \includegraphics[width=\linewidth]{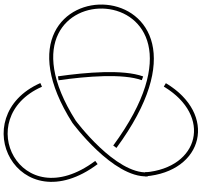}
        \end{minipage} & $g_t = 1$ & $9.503403931$ \\
        
        \hline
        \begin{minipage}{.15\textwidth}
        \includegraphics[width=\linewidth]{figures/unknotted-8.png}
        \end{minipage} & $g_t = 1$ & $12.07764776$ \\ 
        
        \hline
        \begin{minipage}{.15\textwidth}
        \includegraphics[width=\linewidth]{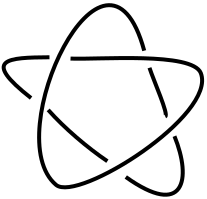}
        \end{minipage} & $g_t = 1$ & $16.8804404$ \\ 
        
        \hline
        \begin{minipage}{.15\textwidth}
        \includegraphics[width=\linewidth]{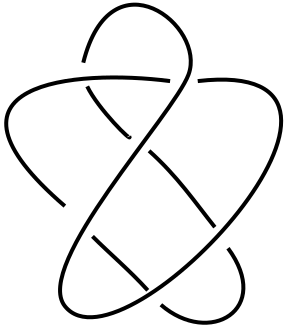}
        \end{minipage} & $g_t = 2$ & $33.595745176$\\ 
        
        \hline
        \begin{minipage}{.15\textwidth}
        \includegraphics[width=\linewidth]{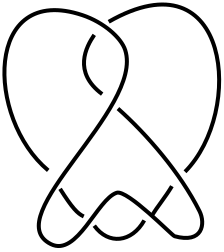}
        \end{minipage} & $g_t = 2$ & $35.488291197$ \\ 
        
        \hline
        
    \end{tabular}
    \caption{Turaev volumes of various classical diagrams of the unknot with Turaev genus $g_t\leq 2$.}
    \label{tab:unknotvolumes}
\end{table}

\clearpage

\begin{table}[!t]
    \centering
    \begin{tabular}{|c|c|c|}
        \hline
        Trefoil Diagram &  Turaev Genus & Volume \\
        
        \hline
        \begin{minipage}{.15\textwidth}
        \includegraphics[width=\linewidth]{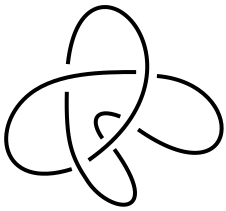}
        \end{minipage} & $g_t = 1$ & $11.3328915634$\\
        
        \hline
        \begin{minipage}{.15\textwidth}
        \includegraphics[width=\linewidth]{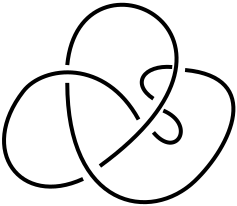}
        \end{minipage} & $g_t = 1$ & $13.541527117$\\
        
        \hline
        \begin{minipage}{.15\textwidth}
        \includegraphics[width=\linewidth]{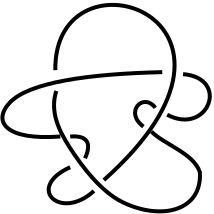}
        \end{minipage} & $g_t = 1$ & $20.179363683$\\
        
        \hline
        
    \end{tabular}
    \caption{Turaev volumes of various classical diagrams of the trefoil knot.}
    \label{tab:trefoilvolumes}
\end{table}

\subsection*{Acknowledgments}  
The authors are grateful for support they received from NSF Grant DMS-1659037 and the Williams College SMALL REU program. And thanks to Hans Boden for providing useful input.

\bibliography{mybib}{}
\nocite{*}
\bibliographystyle{plain}

\end{document}